\def\doctype{}
\newcommand\lam{\lambda}
\newcommand{\cB}{\mathcal{B}}
\newcommand\F{\mathbb{F}}
\newcommand{\comment}[1]{}
\numberwithin{equation}{section}
\let\oldsection\section
\newcommand\boldsection[1]{\oldsection{\bf #1}}
\newcommand\starsection[1]{\oldsection*{\bf #1}}
\renewcommand\section{\@ifstar\starsection\boldsection}
\newtheoremstyle{theorem}
  {12pt}		  
  {0pt}  
  {\sl}  
  {\parindent}     
  {\bf}  
  {. }    
  { }    
  {}     
\theoremstyle{theorem}
\newtheorem{thm}{Theorem}[section]  
\newtheorem{lemma}[thm]{Lemma}     
\newtheorem{cor}[thm]{Corollary}
\newtheorem{prop}[thm]{Proposition}
\newtheoremstyle{definition}
  {12pt}		  
  {0pt}  
  {}  
  {\parindent}     
  {\bf}  
  {. }    
  { }    
  {}     
\theoremstyle{definition}
\newtheorem{ex}[thm]{Example}
\renewcommand{\proofname}{Proof}
\renewenvironment{proof}[1][\proofname]{\par
  \pushQED{\qed}%
  \normalfont \partopsep=\z@skip \topsep=\z@skip
  \trivlist
  \item[\hskip\labelsep
        \scshape
    #1\@addpunct{.}]\ignorespaces
}{%
  \popQED\endtrivlist\@endpefalse
}
\renewcommand*\@maketitle{%
  \normalfont\normalsize
  \@adminfootnotes
  \@mkboth{\@nx\shortauthors}{\@nx\shorttitle}%
  \global\topskip42\p@\relax 
  \@settitle
  \ifx\@empty\authors \else {\vskip 1em
\vtop{\centering\shortauthors\@@par}} \fi
  \ifx\@empty\@date \else {\vskip 1em \vtop{\centering\@date\@@par}}\fi 
  \ifx\@empty\@dedicatory
  \else
    \baselineskip18\p@
    \vtop{\centering{\footnotesize\itshape\@dedicatory\@@par}%
      \global\dimen@i\prevdepth}\prevdepth\dimen@i
  \fi
  \@setabstract
  \normalsize
  \if@titlepage
    \newpage
  \else
    \dimen@34\p@ \advance\dimen@-\baselineskip
    \vskip\dimen@\relax
  \fi
} 
\renewcommand*\@adminfootnotes{%
  \let\@makefnmark\relax  \let\@thefnmark\relax
  \ifx\@empty\@subjclass\else \@footnotetext{\@setsubjclass}\fi
  \ifx\@empty\@keywords\else \@footnotetext{\@setkeywords}\fi
  \ifx\@empty\thankses\else \@footnotetext{%
    \def\par{\let\par\@par}\@setthanks}%
  \fi
\thispagestyle{titlepage}
}
\begin{document}

\title{\large A lower bound on HMOLS with equal sized holes}

\author{Michael Bailey}
\address{\rm Michael Bailey:
Mathematics and Statistics,
University of Victoria, Victoria, BC, Canada
}
\email{mike.bailey122@gmail.com}

\author{Coen del Valle}
\address{\rm Coen del Valle: 
Mathematics and Statistics,
University of Victoria, Victoria, BC, Canada}
\email{cdelvalle@uvic.ca}

\author{Peter J.~Dukes}
\address{\rm Peter J.~ Dukes:
Mathematics and Statistics,
University of Victoria, Victoria, BC, Canada
}
\email{dukes@uvic.ca}

\thanks{Research of Peter Dukes is supported by NSERC grant 312595--2017}

\date{\today}

\begin{abstract}
It is known that $N(n)$, the maximum number of mutually orthogonal latin squares of order $n$, satisfies the lower bound $N(n) \ge n^{1/14.8}$ for large $n$.  For $h\ge 2$, relatively little is known about the quantity $N(h^n)$, which denotes the maximum number of `HMOLS' or mutually orthogonal latin squares having a common equipartition into $n$ holes of a fixed size $h$. 
We generalize a difference matrix method that had been used previously for explicit constructions of HMOLS.
An estimate of R.M. Wilson on higher cyclotomic numbers guarantees our construction succeeds in suitably large finite fields.
Feeding this into a generalized product construction, 
we are able to establish the lower bound $N(h^n) \ge (\log n)^{1/\delta}$ for any $\delta>2$ and all $n > n_0(h,\delta)$.
\end{abstract}

\maketitle
\hrule

\section{Introduction}

\subsection{Overview}

A \emph{latin square} is an $n \times n$ array with entries from an
$n$-element set of symbols such that every row and column is a permutation
of the symbols.  Often the symbols are taken to be from
$[n]:=\{1,\dots,n\}$.  The integer $n$ is called the \emph{order} of the
square.

Two latin squares $L$ and $L'$ of order $n$ are \emph{orthogonal} if
$\{(L_{ij},L'_{ij}): i,j \in [n]\}=[n]^2$; that is, two
squares are orthogonal if, when superimposed, all ordered pairs of symbols
are distinct.  
A family of latin squares in which any pair are orthogonal is 
called a set of \emph{mutually orthogonal latin squares}, or `MOLS' for
short.  The maximum size of a set of MOLS of order $n$ is denoted $N(n)$.
It is easy to see that $N(n) \le n-1$ for $n>1$,  with equality if and only
if there exists a projective plane of order $n$.  Consequently, $N(q)=q-1$
for prime powers $q$.  Using a number sieve and some recursive constructions,
Beth showed \cite{Beth} (building on \cite{CES,WilsonMOLS}) that
$N(n) \ge n^{1/14.8}$ for large $n$.  In fact, by inspecting the sieve 
a little more closely, $14.8$ can be replaced by $14.7994$; we use this observation
later to keep certain bounds a little cleaner.

In this article, we are interested in a variant on MOLS.
An \emph{incomplete latin square} of order $n$ 
is an
$n \times n$ array $L=(L_{ij}: i,j \in [n])$ with entries either blank or in $[n]$,
together with a partition $(H_1,\dots,H_m)$ of some subset of $[n]$ such that
\begin{itemize}
\item
$L_{ij}$ is empty if $(i,j) \in \cup_{k=1}^m H_k \times H_k$ and otherwise contains exactly one symbol;
\item
every row and every column in $L$ contains each symbol at most once; and
\item
symbols in $H_k$ do not appear in rows or columns indexed by $H_k$, $k=1,\dots,m$.
\end{itemize}
The sets $H_k$ are often taken to be intervals of consecutive
rows/columns/symbols (but need not be).
As one special case, when $m=n$ and each $H_k=\{k\}$, the definition is equivalent to a latin square $L$ that is \emph{idempotent}, that is satisfying $L_{ii}=i$ for each $i \in [n]$, except that the diagonal is removed to produce the corresponding incomplete latin square.

The \emph{type} of an incomplete latin square is the list $(h_1,\dots,h_m)$, where $h_i = |H_i|$ for each $i=1,\dots,m$.  When $h_i=n/m$ for all $k$, so that the set of holes is a uniform partition of $[n]$, the term `holey latin square' is used, and the type is abbreviated to $h^m$, where $h=n/m$.  To clarify the notation, we henceforth recycle the parameter $n$ as the number of holes, so that type $h^n$ is considered.  The relevant squares are then $hn \times hn$.

Two holey latin squares $L,L'$ of type $h^n$ (and sharing the same hole partition) are said to be \emph{orthogonal} if each of the $(hn)^2-nh^2$ ordered pairs of symbols from different holes appear exactly once when $L$ and $L'$ are superimposed.  As with MOLS, we use the term `mutually orthogonal' for a set of holey latin squares, any two of which are orthogonal.  The abbreviation HMOLS is standard in the more modern literature; see for instance \cite{ABG,Handbook}.  Following \cite[\S III.4.4]{Handbook}, we use a similar function $N(h^n)$ as for MOLS to denote the maximum number of HMOLS of type $h^n$.  (Some context is needed to properly parse this notation and not mistake `$h^n$' for exponentiation of integers.)

\begin{ex} As an example we give a pair of HMOLS of type $2^4$, also shown in ~\cite{DS}.  In our (slightly different) presentation, the holes are $\{1,2\}$, $\{3,4\}$, $\{5,6\}$, $\{7,8\}$.
\begin{center}
\begin{tabular}{|cccccccc|}
\hline
&&8&6&3&7&4&5\\
&&5&7&8&4&6&3\\
7&6&&&1&8&5&2\\
5&8&&&7&2&1&6\\
4&7&2&8&&&3&1\\
8&3&7&1&&&2&4\\
3&5&6&2&4&1&&\\
6&4&1&5&2&3&&\\
\hline
\end{tabular}
\hspace{1.5cm}
\begin{tabular}{|cccccccc|}
\hline
&&5&7&8&4&6&3\\
&&8&6&3&7&4&5\\
5&8&&&7&2&1&6\\
7&6&&&1&8&5&2\\
8&3&7&1&&&2&4\\
4&7&2&8&&&3&1\\
6&4&1&5&2&3&&\\
3&5&6&2&4&1&&\\
\hline
\end{tabular}
\end{center}
\end{ex}

It is easy to see that $N(n-1) \le N(1^n) \le N(n)$, so Beth's result gives a lower bound on HMOLS in the special case $h=1$. 
Also, if there exist $k$ HMOLS of type $1^n$ and $k$ MOLS of order $h$, then $N(h^n) \ge k$ follows easily by a standard product construction.
However, very little else is known about HMOLS with holes of a fixed size greater than $1$. Some explicit results are known for a small number of squares.  Dinitz and Stinson showed \cite{DS} that $N(2^n) \ge 2$ for $n \ge 4$.  Stinson and Zhu \cite{SZ} extended this to $N(h^n) \ge 2$ for all $h \ge 2$, $n \ge 4$.  Bennett, Colbourn and Zhu \cite{BCZ} settled the case of three HMOLS with a handful of exceptions.  Abel, Bennett and Ge \cite{ABG} obtained several constructions of four, five or six HMOLS and produced a table of lower bounds on $N(h^n)$ for $h \le 20$ and $n \le 50$.  For $2 \le h \le 6$, the largest entry in this table is 7, due to Abel and Zhang in \cite{AZ}.

As a na\"{i}ve upper bound, we have $N(h^n) \le n-2$ from a similar argument as for the standard MOLS upper bound.  In more detail, we may permute symbols in a set of HMOLS so that the first row contains symbols $h+1,\dots,nh$, where symbols $1,\dots,h$ are missing and columns $1,\dots,h$ are blank.  
Consider the symbols that occur in entry $(h+1,1)$ among the family of HMOLS.  At most one element from each of the holes $H_3,\dots,H_n$ can appear.

Our main result is a general lower bound on the rate of growth of $N(h^n)$ for fixed $h$ and large $n$.

\begin{thm}
\label{main}
Let $h$ be a positive integer and $\epsilon>0$.  For $k>k_0(h,\epsilon)$, there exists a set of $k$ HMOLS of type $h^n$ for all $n \ge k^{(3+\epsilon)\omega(h)k^2}$, where $\omega(h)$ denotes the number of distinct prime factors of $h$.  
\end{thm}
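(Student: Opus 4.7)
The plan is to prove the theorem in three stages: a cyclotomic construction in finite fields yielding $k$ HMOLS of type $h^n$ for $n$ in a specific set of ``direct'' values, a product construction extending this set, and a density argument covering every $n$ above the stated threshold.

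For the direct step, I would generalize the difference matrix method highlighted in the abstract. Working in $\mathbb{F}_q$ with $q-1$ divisible by an appropriate multiple of $h$, one identifies a single hole of size $h$ with a subgroup of $\mathbb{F}_q^*$ and the $n$ holes with its cosets. The $k$ HMOLS are then encoded by a $k \times n$ array whose entries lie in carefully selected cyclotomic classes, and mutual orthogonality plus the holey condition reduces to $O(k^2)$ simultaneous cyclotomic difference constraints. R.M.~Wilson's estimate on higher cyclotomic numbers then guarantees such a selection exists whenever $q$ exceeds a threshold of order $k^{(3+\epsilon)\omega(h)k^2}$; the factor $\omega(h)$ enters because the cyclotomic index must account for each distinct prime dividing $h$ (either by passing to the lcm of the individual prime-power indices or via a product-of-fields argument).

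For the second step, I would apply a standard filling-in-holes / product construction, analogous to those in classical MOLS and HMOLS recursions. Combined with Beth's bound $N(m) \ge m^{1/14.7994}$, this shows that from $k$ HMOLS of type $h^{n_0}$ one obtains $k$ HMOLS of type $h^{n_0 m}$ whenever $N(m) \ge k$, i.e.\ for all $m$ above a threshold depending only on $k$. Bertrand-type density for prime powers (applied to locate the required $q$ in any long interval) then ensures that every $n \ge k^{(3+\epsilon)\omega(h)k^2}$ can be written as $n_0 m$ with $n_0$ a direct value coming from a suitable prime power and $m$ large enough for the product step.

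The main obstacle is the quantitative cyclotomic step. Wilson's estimate provides an error of order $q^{-1/2}$ times a power of the cyclotomic index, and it is keeping this error subdominant across all $O(k^2)$ difference conditions that forces the exponent $3+\epsilon$ in the final bound. The sharper form of Wilson's estimate will be essential to extract a clean asymptotic constant, and the case $\omega(h)>1$ requires careful handling of composite cyclotomy to ensure that every prime-power component of $h$ is dealt with simultaneously; this is precisely where the $\omega(h)$ factor materializes.
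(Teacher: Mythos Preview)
Your outline has the right three-stage shape, but the second and third stages contain a genuine gap. Writing $n = n_0 m$ with $n_0$ a ``direct'' value and $N(m)\ge k$ cannot reach every large $n$: if $n$ is prime (or has only very small prime factors) there is no such factorization with both factors in the right range. The paper does not use a multiplicative decomposition at this point. Instead it proves a Wilson-style recursion giving $N(h^{mt+u}) \ge \min\{N(t)-1,\,N(h^m),\,N(hm),\,N(hm+h;h),\,N(h^u)\}$, and then uses a Frobenius-type representation $n = q_1 s + q_2 t$ with two coprime direct values $q_1,q_2$ to hit every sufficiently large $n$ additively. This recursion requires an auxiliary bound on incomplete MOLS, namely $N(hm+h;h)\ge k$, which your plan does not mention and which the paper establishes separately. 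It also requires primes $q_1,q_2\equiv 1\pmod{\lambda}$ in specified intervals, so Bertrand is not enough; one needs an effective Dirichlet-type estimate.

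Your accounting of where the exponent $(3+\epsilon)\omega(h)k^2$ comes from is also off. The direct cyclotomic step does not itself give a threshold of order $k^{(3+\epsilon)\omega(h)k^2}$. In the paper the hole structure is carried by an auxiliary $\mathrm{TD}_\lambda(k,h)$ built from the prime-power factorization of $h$ (not by a subgroup of $\F_q^*$ of order $h$), and Wilson's cyclotomic lemma then yields an $\mathrm{HTD}(k,h^q)$ once $q>\lambda^{k(k-1)}$, which is only about $k^{\omega(h)k^2}$. The extra factor $3$ appears later: the Frobenius/Wilson recursion consumes a product of roughly three quantities of that size (two primes $q_1,q_2$ of that order and a further factor from the representation lemma), which is what pushes the final threshold up to $k^{(3+\epsilon)\omega(h)k^2}$.
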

 
To our knowledge, it has not even been stated that $N(h^n)$ tends to infinity, though by now this is implicit from some results on graph decompositions; see Section~\ref{sec:graph-decompositions}.   The bound of Theorem~\ref{main} is very weak, yet we are satisfied at present due to the apparent difficulty of obtaining direct constructions.  Unlike in the case of MOLS, or in the case $h=1$, there is no obvious `finite-geometric' object to get started for hole size $h \ge 2$.  Indeed, the bulk of our lower bound is needed  to get just a single finite field construction of $k$ HMOLS; see Section~\ref{sec:cyclotomic}.

\subsection{Related objects}

Let $n$ and $k$ be positive integers, where $k \ge 2$.
A \emph{transversal design} TD$(k,n)$ consists of an $nk$-element set of \emph{points} partitioned into $k$ \emph{groups}, each of size $n$, and equipped with a family of $n^2$ \emph{blocks} of size $k$ having the property that any two points in distinct groups appear together in exactly one block.
There exists a TD$(k,n)$ if and only if there exists a set of $k-2$ MOLS of  order $n$. This equivalence is seen by indexing groups of the partition by rows, columns, and symbols from each square.  Transversal designs are closely connected to orthogonal arrays.

As with MOLS, it is possible to extend the definition to include holes.  A \emph{holey transversal design} HTD$(k,h^n)$ is a $khn$-element set, say $[k] \times X$, where $X$ has cardinality $hn$ and an equipartition $(H_1,\dots,H_n)$ of \emph{holes} of size $h$, together with a collection of $h^2n(n-1)$ blocks that cover, exactly once each, every pair of elements $(i,x)$, $(j,y)$ in which $i \neq j$ and $x,y$ are in different holes.  Of course, one could extend the definition to allow holes of mixed sizes, but the uniform hole size case suffices for our purposes.

For a graph $G$ and positive integer $t$, let $G(t)$ denote the graph obtained by replacing every vertex of $G$ by an independent set of $t$ vertices, and replacing every edge of $G$ by a complete bipartite subgraph between corresponding $t$-sets.  In other words, $G(t)$ is the lexicographic graph product $G \cdot \overline{K_t}$.  Let us identify in the natural way the set of points of an HTD$(k,h^n)$ with vertices of the graph $K_k(h) \times K_n$. If we interpret the blocks of the transversal design as $k$-cliques on the underlying set of points, then the condition that two elements appear in a block (exactly once) if and only if they are in distinct groups and distinct holes amounts to every edge of $K_k(h) \times K_n$ falling into precisely one $k$-clique.

The following is a summary of the preceding equivalences.

\begin{prop}
Let $h,k,n$ be positive integers with $k \ge 2$.  The following are equivalent:
\vspace{-10pt}
\begin{itemize}
\item
the existence of a set of $k-2$ HMOLS of type $h^n$;
\item
the existence of a holey transversal design HTD$(k,h^n)$; and
\item
the existence of a $K_k$-decomposition of $K_k (h) \times K_n$.
\end{itemize}
\end{prop}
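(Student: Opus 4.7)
The plan is to establish the proposition via two bijective correspondences chained together, each being a routine extension of the standard MOLS/transversal-design/clique-decomposition dictionary to the holey setting.

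First I would prove the equivalence of (1) and (2). Given $k-2$ HMOLS $L^{(1)},\dots,L^{(k-2)}$ of type $h^n$ on row, column, and symbol set $[hn]$ with a common hole partition $(H_1,\dots,H_n)$, I form an HTD on the point set $[k]\times [hn]$, viewing the $k$ groups as ``row,'' ``column,'' and one group per square. For each filled cell $(i,j)$ I define a block $\{(1,i),(2,j),(3,L^{(1)}_{ij}),\dots,(k,L^{(k-2)}_{ij})\}$; the hole structure of the HTD is inherited from the HMOLS via the obvious copy of $(H_1,\dots,H_n)$ inside each group. The requirement that every two points from distinct groups and distinct holes lie in exactly one block unpacks precisely into the row/column/symbol conditions for the $L^{(\ell)}$ together with their pairwise orthogonality; for example, a cross-group pair coming from two different squares corresponds via the orthogonality of those two squares to a unique cell, hence to a unique block. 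Reversing the construction is immediate by reading off, for each point $(i,j)$ in the row-group $\times$ column-group combinatorics, the coordinates of the block containing it.

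Second I would prove the equivalence of (2) and (3). Writing the point set of an HTD$(k,h^n)$ as $[k]\times [n]\times [h]$, with the $n$ holes grouping together the points sharing the middle coordinate, two points $(i,c,a)$ and $(j,d,b)$ lie in distinct groups and distinct holes iff $i\ne j$ and $c\ne d$. This is exactly the adjacency relation in $K_k(h)\times K_n$: the $K_k(h)$ factor on the first and third coordinates contributes the relation $i\ne j$, while the $K_n$ factor on the middle coordinate contributes $c\ne d$. A block of the HTD has size $k$ and meets each group exactly once (forced by the cross-group covering count), so it is a transversal of $[k]\times[n]\times[h]$ over the first coordinate, hence a $K_k$-clique in the graph. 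The stipulation that every valid pair lies in exactly one block is then precisely the condition that every edge of $K_k(h)\times K_n$ belongs to exactly one $K_k$ of the decomposition, and conversely every such decomposition yields the block set of an HTD.

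Chaining these two correspondences gives the proposition. There is no genuine obstacle: the proof is a careful unpacking of definitions once the indexing sets of the HMOLS, HTD, and graph have been aligned, with the only notational care being to keep the hole partition consistent across all $k$ groups.
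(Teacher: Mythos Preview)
Your proposal is correct and follows exactly the route the paper intends: the paper does not give a separate proof but presents this proposition as ``a summary of the preceding equivalences,'' having already sketched the HMOLS/HTD dictionary and the identification of HTD points with vertices of $K_k(h)\times K_n$ so that blocks become $K_k$-cliques. Your write-up simply fills in the routine details of that same standard correspondence.
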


\subsection{Existence via graph decompositions}
\label{sec:graph-decompositions}

In \cite{BKLOT}, Barber, K\"uhn, Lo, Osthus and Taylor prove a powerful existence result on $K_k$-decompositions of `dense' $k$-partite graphs.  In a little more detail, let us call a $k$-partite graph $G$ \emph{balanced} if every partite set has the same cardinality and \emph{locally balanced} if every vertex has the same number of neighbors in each of the other partite sets.  The main result of \cite{BKLOT} assures that any balanced and locally balanced $k$-partite graph on $kn$ vertices has a $K_k$-decomposition if $n$ is sufficiently large and the minimum degree satisfies $\delta(G) > C(k) (k-1)n$.  Here, $C(k)$ is a constant less than one associated with the `fractional $K_k$-decomposition' threshold.

We remark that the preceding machinery is enough to guarantee, for fixed $k$ and $h$, the existence of an HTD$(k,h^n)$ for sufficiently large $n$, since $K_k(h) \times K_n \cong K_k \times K_n(h)$ is $k$-partite and $r$-regular, where $r=h(k-1)(n-1)=(k-1)hn - h(k-1)$.  In fact, even a slowly growing parameter $h$ (as a function of $n$) can be accommodated.  However, the result in \cite{BKLOT} makes no attempt to quantify how large $n$ must be for the decomposition.  Even in the case of the structured $k$-partite graph we are considering, it is likely hopeless to obtain a reasonable bound on $n$ by this method.

Separately, the theory \cite{DMW,LW} of `edge-colored graph decompositions' due to R.M. Wilson and others, can be applied to the setting of HMOLS.  To sketch the details, we fix $h$ and $k$ and consider the graph $K_k(h) \times K_n$ for large $n$.  From this, we set up a directed complete graph $K_n^*$ with $r=(kh)^2-kh^2$ edge-colors between two vertices.  Each color corresponds with an edge of the bipartite graph $K_k(h) \times K_2$ occurring between two of the $n$ vertices.  Let $\mathcal{H}$ denote the family of all $r$-edge-colored cliques $K_k$ which correspond to legal placements of a block in our TD.  We seek an $\mathcal{H}$-decomposition of $K_n^*$, and this is guaranteed for sufficiently large $n$ from \cite[Theorem 1.2]{LW}. (We omit several routine calculations needed to check the hypotheses.)

Wilson's approach makes it difficult to obtain reasonable bounds on $n$, although in this context it is worth mentioning the bounds of Y.~Chang \cite{Chang-BIBD2,Chang-TD} for block designs and transversal designs.

\subsection{Outline}  The outline of the rest of the paper is as follows.  In Section~\ref{sec:cyclotomic}, we obtain a direct construction of $k$ HMOLS of type $h^q$ for large prime powers $q$.  This finite field construction is inspired by a method in \cite{DS} that was applied for $k \le 6$.  Then, in Section~\ref{sec:constructions}, we adapt a product-style MOLS construction in \cite{WilsonMOLS} to the setting of HMOLS.  The proof of our main result, Theorem~\ref{main}, is completed in Section~\ref{sec:proof}.  We conclude with a discussion of a few next steps for research on HMOLS.

\section{A cyclotomic construction}
\label{sec:cyclotomic}

\subsection{Expanding transversal designs of higher index}

Let $\lambda$ be a positive integer.  We define a TD$_\lam(k,n)$ similarly as a TD$(k,n)$, except that any two points in distinct groups appear together in exactly $\lam$ blocks (and otherwise in zero blocks).  
The integer $\lam$ is called the \emph{index} of the transversal design.

The main idea in what follows is to expand a TD$_\lam(k,h)$, where $h$ is the desired hole size, into an HTD$(k,h^q)$ for suitable large prime powers $q$.  Unless $k$ is small relative to $h$, the input design for this construction may require large index $\lam$.  When $h$ is itself a prime power, a 
TD$_\lam(k,h)$ naturally arises from a linear algebraic construction.

\begin{prop}
\label{td-projection}
Let $h$ be a prime power and $d$ a positive integer with $k \le h^d$.  Then there exists a TD$_{h^{d-1}}(k,h)$.
\end{prop}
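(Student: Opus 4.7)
The plan is to give a linear-algebraic orthogonal array construction over $\F_h$, exploiting the fact that a TD$_{h^{d-1}}(k,h)$ has exactly $h^{d-1}\cdot h^2 = h^{d+1}$ blocks, which matches $|\F_h^{d+1}|$. So I would parametrize the blocks by the vectors of $V := \F_h^{d+1}$.

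First I would choose $k$ pairwise linearly independent linear functionals $\phi_1,\dots,\phi_k \in V^*$. This is possible because the number of one-dimensional subspaces of $V^*$ equals $(h^{d+1}-1)/(h-1) \ge h^d \ge k$, so one nonzero representative may be chosen from each of $k$ distinct such subspaces. The point set will be $P = [k]\times \F_h$ with groups $G_i = \{i\}\times \F_h$, and for each $v \in V$ the associated block is
\[
B_v := \{(i,\phi_i(v)) : i \in [k]\}.
\]
Every $B_v$ meets each group in exactly one point, so it is a transversal of size $k$.

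The main step is the replication count: for $i\neq j$ and $a,b\in \F_h$, the points $(i,a)$ and $(j,b)$ both lie in $B_v$ iff $\phi_i(v)=a$ and $\phi_j(v)=b$. Linear independence of $\phi_i$ and $\phi_j$ forces the joint map $(\phi_i,\phi_j)\colon V \to \F_h^2$ to be surjective, so each fiber has size $|V|/h^2 = h^{d-1}$. Hence every such pair of points co-occurs in exactly $h^{d-1}$ blocks, as required. Pairs within the same group never share a block by construction.

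I do not anticipate any serious obstacle: the whole argument reduces to the elementary counting lemma for the kernels of a surjective linear map between finite-dimensional spaces over $\F_h$, together with the crude projective-space estimate $(h^{d+1}-1)/(h-1)\ge h^d$ to guarantee enough pairwise independent directions. The only mild care is in noting that this bound is tight enough to match the hypothesis $k \le h^d$, rather than something weaker like $k \le h+1$ which a naive MOLS-based attempt would yield.
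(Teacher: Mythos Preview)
Your argument is correct and is essentially the same linear-algebraic construction as the paper's: the paper indexes blocks by $(a,u)\in H\times H^d$ and assigns to the group labeled $v\in H^d$ the symbol $a+u\cdot v$, which is precisely your setup with the explicit choice of functionals $\phi_v(a,u)=a+u\cdot v$ on $V=\F_h^{d+1}$. The paper phrases the key count as ``$|\{u:u\cdot(v_1-v_2)=\delta\}|=h^{d-1}$'' and develops additively, while you phrase it as surjectivity of $(\phi_i,\phi_j)$ onto $\F_h^2$; these are the same fiber-size computation, so the approaches coincide apart from your slightly more abstract (and mildly more general) packaging via arbitrary pairwise-independent functionals rather than the specific affine ones.
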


\begin{proof}
Let $H$ be a field of order $h$.  Our construction uses points $H \times H^d$, where groups are of the form $H \times \{v\}$, $v \in H^d$.  Consider the family of blocks
$$\mathcal{B} =\{ \{ (a+u\cdot v,v): v \in H^d \} : a \in H, u \in H^d \},$$
where $u \cdot v$ denotes the usual dot product in the vector space $H^d$.  
The family $\mathcal{B}$ can be viewed as the result of developing the subfamily 
$\mathcal{B}_0 =\{ \{ (u\cdot v,v): v \in H^d \} : u \in H^d \}$
additively under $H$.
Fix two elements $v_1 \neq v_2$ in $H^d$ and a `difference' $\delta \in H$. Then since $|\{u : u \cdot (v_1-v_2)=\delta\}|=h^{d-1}$, it follows that there are exactly $h^{d-1}$ elements in $\mathcal{B}_0$ which achieve difference $\delta$ across the groups indexed by $v_1$ and $v_2$.  Therefore, two points $(a_1,v_1)$, $(a_2,v_2) \in H \times H^d$ are together in exactly one translate of each of those blocks, where $a_1-a_2=\delta$.  We have shown that $\mathcal{B}$ produces a transversal design of index $h^{d-1}$ on the indicated points and group partition; the restriction to (any) $k$ groups produces the desired TD$_{h^{d-1}}(k,h)$.
\end{proof}

We now use a standard product construction to build higher index transversal designs for the case where $h$ has multiple distinct prime divisors.

\begin{prop}
If there exists both a TD$_{\lambda_1}(k,h_1)$ and a TD$_{\lambda_2}(k,h_2)$, then there exists a TD$_{\lambda_1\lambda_2}(k,h_1h_2)$.
\end{prop}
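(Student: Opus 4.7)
The plan is to use the standard direct product construction for transversal designs, parametrizing points by pairs. Let the input TD$_{\lambda_1}(k,h_1)$ have point set $[k] \times X_1$ with groups $\{i\}\times X_1$ and block family $\mathcal{B}_1$, and similarly let the TD$_{\lambda_2}(k,h_2)$ live on $[k] \times X_2$ with groups $\{i\}\times X_2$ and blocks $\mathcal{B}_2$. I would build a design on point set $[k] \times (X_1 \times X_2)$ with groups $G_i := \{i\} \times (X_1 \times X_2)$, $i \in [k]$; note $|G_i|=h_1h_2$, as required.

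Next, for each ordered pair $(B,B') \in \mathcal{B}_1 \times \mathcal{B}_2$, write $B=\{(i,a_i):i\in[k]\}$ and $B'=\{(i,b_i):i\in[k]\}$ (using that each block meets each group exactly once), and define the product block
\[
B \otimes B' \ := \ \{(i,(a_i,b_i)) : i \in [k]\}.
\]
This is a transversal of the new group partition. Let $\mathcal{B}$ be the collection of all such $B \otimes B'$.

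The remaining step is to count covers. Fix two points $(i,(a,b))$ and $(j,(a',b'))$ with $i \neq j$; I want to show they lie in exactly $\lambda_1\lambda_2$ blocks of $\mathcal{B}$. By the defining formula, $(i,(a,b)),(j,(a',b')) \in B \otimes B'$ if and only if $(i,a),(j,a') \in B$ and $(i,b),(j,b') \in B'$. By the indices of the two input designs, the number of $B \in \mathcal{B}_1$ satisfying the first condition is exactly $\lambda_1$, and the number of $B' \in \mathcal{B}_2$ satisfying the second is exactly $\lambda_2$. Since the two choices are independent, the count is $\lambda_1\lambda_2$, as required.

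There is no real obstacle here; the argument is pure bookkeeping. The only small subtlety worth noting is that a product block is well-defined exactly because each $B \in \mathcal{B}_j$ meets each group in a unique point, which is part of the definition of a transversal design.
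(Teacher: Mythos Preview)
Your proof is correct and is essentially the same direct-product construction the paper uses: the paper phrases it as ``for each block $\beta$ of the TD$_{\lambda_1}(k,h_1)$, put the blocks of a TD$_{\lambda_2}(k,h_2)$ on $\{(x,y,z):(x,y)\in\beta,\ z\in H_2\}$,'' which unwinds to exactly your product blocks $B\otimes B'$. You have simply written out the covering count that the paper leaves as ``easy to verify.''
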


\begin{proof}
Take the given TD$_{\lambda_i}(k,h_i)$ on point set $[k]\times H_i$, $i=1,2$, where $[k]$ indexes the groups. We construct our TD$_{\lambda_1\lambda_2}(k,h_1h_2)$ on points $[k]\times H_1 \times H_2$. For each block $\beta$ of the TD$_{\lambda_1}(k,h_1)$, we put the blocks of a TD$_{\lambda_2}(k,h_2)$ on $\{(x,y,z) : (x,y)\in\beta, z\in H_2\}$. It is easy to verify the resulting design is a TD$_{\lambda_1\lambda_2}(k,h_1h_2)$.
\end{proof}

The next result follows immediately from the previous two propositions and induction.

\begin{cor}
\label{prod-inf}
Let $h \ge 2$ be an integer which factors into prime powers as  $q_1q_2\cdots q_{\omega(h)}$. 
Put $\lam(h,k):=\prod_{i=1}^{\omega(h)} q_i^{d_i-1}$, where $d_i=\lceil \log_{q_i} k\rceil$ for each $i$.
Then there exists a TD$_\lambda(k,h)$.
\end{cor}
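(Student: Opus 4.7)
The plan is to prove this by straightforward induction on $\omega(h)$, using Proposition~\ref{td-projection} as the base case and the preceding product construction for the inductive step.

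For the base case $\omega(h)=1$, the integer $h$ is itself a prime power, say $h=q_1$, and the single exponent in the definition of $\lambda(h,k)$ is $d_1=\lceil\log_{q_1}k\rceil$. By choice of $d_1$ we have $k\le q_1^{d_1}=h^{d_1}$, so Proposition~\ref{td-projection} directly produces a TD$_{q_1^{d_1-1}}(k,h)$, which is exactly a TD$_{\lambda(h,k)}(k,h)$.

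For the inductive step with $\omega(h)\ge 2$, I would factor $h=q_1 h'$, where $q_1$ is one of the prime power factors and $h'=q_2\cdots q_{\omega(h)}$ satisfies $\omega(h')=\omega(h)-1$. The induction hypothesis yields a TD$_{\lambda(h',k)}(k,h')$ with $\lambda(h',k)=\prod_{i=2}^{\omega(h)}q_i^{d_i-1}$, while the base case (applied to the prime power $q_1$) gives a TD$_{q_1^{d_1-1}}(k,q_1)$. Applying the product proposition then produces a TD of index $q_1^{d_1-1}\cdot\lambda(h',k)=\lambda(h,k)$ on $q_1 h'=h$ points per group, completing the induction.

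I do not expect any genuine obstacle here: the content of the corollary is already in the two preceding propositions, and the only thing to verify is that the indices multiply up to the stated value of $\lambda(h,k)$. The mild subtlety worth flagging is that $k$ must satisfy $k\le q_i^{d_i}$ for every prime-power factor $q_i$ to legally invoke Proposition~\ref{td-projection} at each step, but this is immediate from the ceiling definition $d_i=\lceil\log_{q_i}k\rceil$.
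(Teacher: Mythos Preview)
Your proof is correct and matches the paper's approach: the paper simply states that the corollary ``follows immediately from the previous two propositions and induction,'' and you have written out precisely that induction on $\omega(h)$.
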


%

Next, we show how to expand a transversal design of group size $h$ and index $\lam$ into an HTD with hole size $h$ (and index one). Roughly speaking, elements are expanded into copies of a finite field $\F_q$, where $q \equiv 1 \pmod{\lam}$.  Each block is lifted so that previously overlapping pairs now cover the cyclotomic classes of index $\lam$, and then blocks are developed additively in $\F_q$.  To this end, we cite a guarantee of R.M. Wilson on cyclotomic difference families in sufficiently large finite fields.

\begin{lemma}[Wilson; see \cite{WilsonCyc}, Theorem 3]
\label{wilson-cyc}
Let $\lambda$ and $k$ be given integers, $\lambda,k \ge 2$.  For any prime power $q \equiv 1 \pmod{\lambda}$ with $q>\lambda^{k(k-1)}$, there exists a $k$-tuple $(a_1,\dots,a_k) \in \F_q^k$ such that the $\binom{k}{2}$ differences $a_j-a_i$, $1 \le i < j \le k$, belong to any prespecificed cosets of the index-$\lambda$ subgroup of $\F_q^\times$.
\end{lemma}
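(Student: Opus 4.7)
The plan is to prove the lemma via the standard multiplicative character machinery: express each cyclotomic coset indicator as a linear combination of characters, expand the count of valid $k$-tuples, isolate a main term, and estimate the rest using Weil's bound on multiplicative character sums.

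Since $\lambda \mid q-1$, fix a multiplicative character $\chi$ of $\F_q^\times$ of exact order $\lambda$ and a generator $g$ of $\F_q^\times$; extend all powers of $\chi$ to $\F_q$ by setting $\chi^j(0)=0$. Orthogonality of the character group gives, for every $x\in\F_q$,
$$\mathbf{1}_{C_t}(x)\;=\;\lambda^{-1}\sum_{j=0}^{\lambda-1}\overline{\chi}^j(g^t)\,\chi^j(x).$$
Let $N$ denote the number of $k$-tuples $(a_1,\dots,a_k)\in\F_q^k$ with $a_j-a_i\in C_{t_{ij}}$ for all $i<j$. Substituting and interchanging sums yields
$$N\;=\;\lambda^{-\binom{k}{2}}\sum_{(c_{ij})\in(\Z/\lambda\Z)^{\binom{k}{2}}}\Bigl[\prod_{i<j}\overline{\chi}^{c_{ij}}(g^{t_{ij}})\Bigr]\,S(c),\qquad S(c):=\sum_{a_1,\dots,a_k\in\F_q}\prod_{i<j}\chi^{c_{ij}}(a_j-a_i),$$
so it suffices to show $N>0$ whenever $q>\lambda^{k(k-1)}$.

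Isolate the term $c=0$: under the convention $\chi^0(0)=0$, we have $S(0)=q(q-1)\cdots(q-k+1)$, contributing $\lambda^{-\binom{k}{2}}\bigl(q^k-O(k^2q^{k-1})\bigr)$ to $N$. For $c\neq 0$, fix indices $i_0<j_0$ with $c_{i_0 j_0}\neq 0$ and sum $S(c)$ over $a_{j_0}$ last. The summand vanishes unless the remaining $a_i$'s are pairwise distinct, and in that case the inner sum equals $\sum_{x\in\F_q}\chi(f(x))$, where $f$ is a polynomial in $a_{j_0}$ of degree at most $k-1$ whose exponent at the root $a_{i_0}$ is $c_{i_0 j_0}\in\{1,\dots,\lambda-1\}$. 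Since the other roots of $f$ are distinct from $a_{i_0}$, $f$ is not a $\lambda$-th power in $\overline{\F_q}[x]$, and Weil's theorem yields $|\sum_x\chi(f(x))|\le(k-2)\sqrt{q}$. Bounding the remaining variables trivially gives $|S(c)|\le(k-1)\,q^{k-1/2}$.

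Assembling the estimates and using $|\overline{\chi}^{c_{ij}}(g^{t_{ij}})|=1$, the total deviation from the main term is at most $\lambda^{-\binom{k}{2}}(\lambda^{\binom{k}{2}}-1)(k-1)q^{k-1/2}\le(k-1)q^{k-1/2}$; hence $N>0$ as soon as $q^k/\lambda^{\binom{k}{2}}$ dominates $(k-1)q^{k-1/2}$, which reduces to $q>\lambda^{k(k-1)}$ up to a polynomial-in-$k$ factor. The main obstacle is precisely this error bookkeeping: one must verify the Weil hypothesis after each freezing of variables and absorb the polynomial losses into the $\lambda^{k(k-1)}$ threshold. Wilson accomplishes this through a finer inductive treatment of the degenerate strata (where two $a_i$'s collide or the polynomial $f$ accidentally becomes a $\lambda$-th power), obtaining the clean exponent stated above; see \cite{WilsonCyc} for the detailed analysis.
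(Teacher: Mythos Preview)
The paper does not supply its own proof of this lemma; it is quoted from Wilson~\cite{WilsonCyc} and used as a black box, so there is nothing in the paper to compare your argument against. Your sketch via multiplicative-character orthogonality and the Weil bound is precisely Wilson's method, so in outline you have reconstructed the source correctly.

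The one substantive comment concerns the gap you flag at the end. Your crude estimate $|S(c)|\le (k-1)q^{k-1/2}$, summed over the $\lambda^{\binom{k}{2}}-1$ nonzero tuples $c$, only yields $N>0$ once $\sqrt{q}\gtrsim (k-1)\lambda^{\binom{k}{2}}$, i.e.\ $q\gtrsim (k-1)^2\lambda^{k(k-1)}$, not the clean threshold $q>\lambda^{k(k-1)}$. Your description of the fix as ``a finer inductive treatment of the degenerate strata'' is not quite accurate: the degenerate strata (coincident $a_i$'s, or $f$ becoming a $\lambda$-th power) are already disposed of by your convention $\chi^0(0)=0$ together with the observation that the exponent at $a_{i_0}$ is $c_{i_0j_0}\not\equiv 0$. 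Wilson's sharp exponent instead comes from a tighter organization of the character sum (fixing one coordinate by translation and comparing main and error terms more carefully so that the polynomial-in-$k$ factors are absorbed). For the downstream applications in this paper the distinction is harmless, since only a bound of the form $q>\lambda^{O(k^2)}$ is ever invoked; but if you intend your write-up to stand on its own, you should either reproduce Wilson's bookkeeping or weaken the stated threshold to what your estimate actually delivers.
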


Applying this, we have the following result which mirrors \cite[Construction 6]{DLL}.

\begin{prop}
\label{htd-cons}
Suppose there exists a TD$_\lam(k,h)$ and $q$ is a prime power with $q \equiv 1 \pmod{\lam}$, $q>\lam^{k(k-1)}$.  Then there exists an HTD$(k,h^q)$.
\end{prop}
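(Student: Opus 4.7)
The plan is to lift each block of the TD$_\lam(k,h)$ into $\F_q$-coordinates and then develop under an affine action of $C\rtimes\F_q$, where $C\le\F_q^\times$ is the subgroup of index $\lam$. Identify the point set of the TD$_\lam(k,h)$ with $[k]\times H$ (groups $\{i\}\times H$, $|H|=h$), and build the HTD on $[k]\times H\times\F_q$, with HTD groups $\{i\}\times H\times\F_q$ and holes $H\times\{a\}$ for $a\in\F_q$. Let $C_0,C_1,\dots,C_{\lam-1}$ be the cosets of $C$ in $\F_q^\times$.

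The key combinatorial step is a coset prescription. For each pair $\pi=\{(i,y_1),(j,y_2)\}$ with $i<j$, there are exactly $\lam$ blocks of the TD containing $\pi$; enumerate these blocks arbitrarily $B_\pi^{(0)},\dots,B_\pi^{(\lam-1)}$ and declare that, inside $B_\pi^{(s)}$, the difference $a_j-a_i$ should lie in $C_s$. This determines a coset prescription for every internal pair of every block $B\in\mathcal{B}$. Wilson's cyclotomic guarantee (Lemma~\ref{wilson-cyc}), applicable because $q\equiv 1\pmod\lam$ and $q>\lam^{k(k-1)}$, then produces for each $B$ a tuple $(a_1^{(B)},\dots,a_k^{(B)})\in\F_q^k$ realizing all $\binom{k}{2}$ prescribed cosets simultaneously.

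Having fixed these tuples, take as HTD blocks the affine translates
\[
\beta_{B,c,t}=\{(i,y_i^{(B)},c\,a_i^{(B)}+t):i\in[k]\},\qquad B\in\mathcal{B},\ c\in C,\ t\in\F_q.
\]
The count $|\mathcal{B}|\cdot|C|\cdot q=\lam h^2\cdot\tfrac{q-1}{\lam}\cdot q=h^2q(q-1)$ matches the HTD$(k,h^q)$ target. To verify coverage, take a pair $\{(i,y_1,u),(j,y_2,v)\}$ with $i\ne j$ (say $i<j$) and $u\ne v$. Coverage by some $\beta_{B,c,t}$ requires $y_i^{(B)}=y_1$, $y_j^{(B)}=y_2$, and $c\,a_i^{(B)}+t=u$, $c\,a_j^{(B)}+t=v$; subtracting gives $c(a_j^{(B)}-a_i^{(B)})=v-u$. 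Since $v-u$ lies in a unique coset $C_s$ of $C$, the prescription picks out the unique block $B=B_\pi^{(s)}$ among the $\lam$ containing $\pi=\{(i,y_1),(j,y_2)\}$ whose lifted difference sits in $C_s$; then $c\in C$ and $t\in\F_q$ are forced. Hence existence and uniqueness both hold.

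The main obstacle is arranging the $\binom{k}{2}$ coset constraints inside every single block to be simultaneously realizable for a single tuple $(a_1^{(B)},\dots,a_k^{(B)})$, since the prescriptions at different pairs of $B$ are forced independently by the labelling step. This is precisely the content of Wilson's cyclotomic difference estimate, and the hypothesis $q>\lam^{k(k-1)}$ is exactly the threshold at which it kicks in.
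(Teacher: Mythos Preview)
Your proof is correct and follows essentially the same approach as the paper: label the $\lam$ blocks through each cross-group pair with distinct coset indices (the paper calls this map $\mu$ and obtains it by a ``greedy labeling''), invoke Wilson's cyclotomic lemma to lift each block to a $k$-tuple in $\F_q$, and develop under the affine action of $C\times\F_q$. Apart from a small notational slip (your holes should be $[k]\times H\times\{a\}$, not $H\times\{a\}$) and the fact that you handle the same-hole case implicitly via the block count rather than directly, the argument matches the paper's.
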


\begin{proof}
Consider a TD$_\lam(k,h)$ on $[k] \times H$, with block collection $\mathcal{B}$. Consider the collection of point-block incidences $S:=\{((x,y),\beta) : (x,y)\in\beta\in\mathcal{B}\}$.  Let $\mu : {S \choose 2}\to\{0,1,\dots,\lambda-1\}$ be defined such that for each fixed pair $(i,y), (j,y')$ with $1\leq i<j\leq k$, $$\left\{\mu(\{((i,y),\beta),((j,y'),\beta)\right\}) : \beta\supset \{(i,y),(j,y')\}\}=\{0,1,\dots,\lambda-1\}.$$  (One can choose such a $\mu$ via a `greedy 
labeling'.)
Pick a prime power $q\equiv 1\pmod{\lambda}$, $q>\lam^{k(k-1)}$, and let $C_0,C_1,\dots, C_{\lambda-1}$ denote the cyclotomic classes of index $\lambda$ in $\F_q$. By Lemma~\ref{wilson-cyc} there is a map $\phi : S\to \F_q$ such that for every block $\beta\in\mathcal{B}$, and $i<j$, $\phi((i,y),\beta)-\phi((j,y'),\beta)\in C_t$, where $t=\mu(\{((i,y),\beta),((j,y'),\beta)\})$. We construct an HTD$(k,h^q)$ on  $[k]\times H\times \F_q$ as follows. For each $a\in C_0$, $\beta\in \mathcal{B}$, and $c\in \F_q$, include the block $a\beta'+c$, where $a\beta'+c=\{(x,y,a\phi((x,y),\beta)+c) : (x,y)\in \beta\}$.

It is clear that if two points are in the same group, they will appear together in no common blocks; this is inherited from the original TD$_\lam(h,k)$.  Consider two points in different groups, but the same hole, say $(x,y,z)$, and $(x',y',z)$, where $x \neq x'$. If there were some block $a\beta'+c$ containing both points then we would have $\phi((x,y),\beta)=\phi((x',y'),\beta)$, an impossibility.  It remains to show that any two points from different groups and holes appear together in exactly one block. Let $(x,y,z)$, and $(x',y',z')$ be two such points. By construction there is exactly one block $\beta$ satisfying $z-z'\in C_t$ where $t=\mu(\{((x,y),\beta),((x',y'),\beta)\})$.  Then, there is some $a\in C_0$ satisfying $a(\phi((x,y),\beta)-\phi((x',y'),\beta))=z-z'$, and so our two chosen points belong to the block $a\beta'+c$, where $c=z-a\phi((x,y),\beta)$.
\end{proof}

Combining Corollary \ref{prod-inf} and Proposition \ref{htd-cons}, we obtain a construction of HTD$(k,h^q)$ for general $h$ and $k$ and certain large prime powers $q$.

\begin{thm}
\label{cyclotomic-hmols}
Let $h \ge 2$ be an integer which factors into prime powers as $q_1q_2\cdots q_{\omega(h)}$. 
Then there exists an HTD$(k,h^q)$ for all prime powers $q\equiv 1\pmod{\lam(h,k)}$, $q>\lam(h,k)^{k(k-1)}$.  In other words, $N(h^q) \ge k$ for all prime powers $q\equiv 1 \pmod{\lam(h,k+2)}$ with
$q>\lam(h,k+2)^{(k+2)(k+1)}$.
\end{thm}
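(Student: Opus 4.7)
The plan is short, because this statement is essentially a packaging of the two preceding results. First, I would invoke Corollary~\ref{prod-inf} with the prime-power factorization $h = q_1 q_2 \cdots q_{\omega(h)}$ to obtain a TD$_{\lam(h,k)}(k,h)$. Recall that Corollary~\ref{prod-inf} itself is just the result of applying Proposition~\ref{td-projection} to each factor $q_i$ (producing a TD$_{q_i^{d_i-1}}(k,q_i)$ since $k \le q_i^{d_i}$ by choice of $d_i = \lceil \log_{q_i} k \rceil$) and then iterating the index-multiplicative product construction.

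Next, under the hypotheses $q \equiv 1 \pmod{\lam(h,k)}$ and $q > \lam(h,k)^{k(k-1)}$, I would apply Proposition~\ref{htd-cons} directly to that TD$_{\lam(h,k)}(k,h)$ to produce an HTD$(k,h^q)$. This already establishes the first assertion. Note that the hypotheses of Proposition~\ref{htd-cons} are exactly what is needed to feed Lemma~\ref{wilson-cyc} into the cyclotomic expansion step.

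For the second assertion, I would use the equivalence recorded in the summary proposition at the end of Subsection~1.2: an HTD$(k',h^q)$ is equivalent to a set of $k'-2$ HMOLS of type $h^q$. So to conclude $N(h^q) \ge k$, it suffices to guarantee the existence of an HTD$(k+2,h^q)$. Applying the first assertion with $k$ replaced by $k+2$, the required conditions become $q \equiv 1 \pmod{\lam(h,k+2)}$ and $q > \lam(h,k+2)^{(k+2)(k+1)}$, which matches the stated bound.

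There is essentially no obstacle here; all of the substantive work has already been carried out in Proposition~\ref{td-projection} (the linear-algebraic construction of a TD of prime-power order and given index), the higher-index product construction, and Proposition~\ref{htd-cons} (the cyclotomic expansion, which is where Wilson's Lemma~\ref{wilson-cyc} is actually invoked). The only thing to check is that the index and size bounds track correctly under the shift $k \mapsto k+2$, which is immediate from the definition of $\lam(h,\cdot)$ and the exponent $k(k-1)$ in Proposition~\ref{htd-cons}. This theorem is therefore the clean finite-field output that will be fed into the product construction of Section~\ref{sec:constructions} to prove the main result.
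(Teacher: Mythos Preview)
Your proposal is correct and matches the paper's approach exactly: the paper simply states the theorem as the combination of Corollary~\ref{prod-inf} and Proposition~\ref{htd-cons}, with the second assertion following from the HTD/HMOLS equivalence under the shift $k \mapsto k+2$. There is no additional argument in the paper beyond what you have outlined.
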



\subsection{Template matrices and explicit computation}

We include here some remarks on explicit computer-aided construction of HTD$(k,h^q)$ in the special case of prime hole size $h$.
The `expansion' construction of Proposition~\ref{htd-cons} relies on lifting all blocks so that the differences across any two points fall into distinct cyclotomic classes.  Using a `template matrix' method introduced by Dinitz and Stinson \cite{DS}, it is possible to impose some additional structure on this lifting to gain an efficiency in computations.

With notation similar to before, we define an $h^d \times h^d$ `template matrix' $T_d(h)$ as the Gram matrix of the vector space $H^d$.  That is, rows and columns of $T_d(h)$ are indexed by $H^d$, and $T_d(h)_{uv} = u \cdot v$.  Although the order in which columns appear is unimportant, it is convenient to index the rows in lexicographic order.  When $h=2$, the template is simply a Walsh Hadamard matrix (with entries $0,1$ instead of $\pm 1$).  We offer another example below.

\begin{ex}
Consider the case $h=3$, $d=2$, which is suitable for the construction of up to $6=2^3-2$ HMOLS having hole size $3$.
With rows (and columns) indexed by the lex order on $\F_3^2$, we have
$$T_3(2)=
\left[\begin{array}{ccc|ccc|ccc}
0&0&0&0&0&0&0&0&0\\
0&1&2&0&1&2&0&1&2\\
0&2&1&0&2&1&0&2&1\\
\hline
0&0&0&1&1&1&2&2&2\\
0&1&2&1&2&0&2&0&1\\
0&2&1&1&0&2&2&1&0\\
\hline
0&0&0&2&2&2&1&1&1\\
0&1&2&2&0&1&1&2&0\\
0&2&1&2&1&0&1&0&2\\
\end{array}\right].$$
\end{ex}

Observe that the difference between any two distinct columns of $T_h(d)$ achieves every value in $H$ exactly $h^{d-1}$ times each; this is essentially the content of Proposition~\ref{td-projection}.  For $k \le h^d$, the restriction of $T_h(d)$ to any $k$ columns has the same property.  As we illustrate in Example~\ref{401} to follow, aiming for a value of $k$ less than $h^d$ may be a worthwhile tradeoff in computations.

We use the template matrix in conjunction with the following `relative difference matrix' setup for HTDs.

\begin{lemma}[see \cite{DS}]\label{difcon}
Let $G$ be an abelian group of order $g$ with subgroup $H$ of order $h$, and $\cB\subseteq G^k$. If for all $r,s$ with $1\leq r<s\leq k$ and each $a\in G\setminus H$ there is a unique $b\in \cB$ with $b_r-b_s=a$, then there exists an HTD$(k,h^{g/h})$. 
\end{lemma}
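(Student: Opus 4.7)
The plan is to build the HTD by developing $\cB$ additively under the full group $G$. Take the point set to be $[k] \times G$, let the $k$ groups be $\{i\} \times G$ for $i \in [k]$, and let the $g/h$ holes be the sets $[k] \times (c + H)$ as $c + H$ ranges over the cosets of $H$ in $G$; thus each hole has size $kh$ and meets each group in exactly $h$ points. For each $b \in \cB$ and each $c \in G$, declare the set $b + c := \{(i, b_i + c) : i \in [k]\}$ to be a block. By construction each block is a transversal of the $k$ groups, so the block/group incidence required of an HTD is automatic.

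The main content is the pair-coverage condition. Fix two points $(r, x)$ and $(s, y)$ with $r \neq s$. A block $b + c$ contains both of them if and only if $b_r - b_s = x - y$ and $c = x - b_r$. When $(r,x)$ and $(s,y)$ lie in different holes, i.e.\ $x - y \in G \setminus H$, the hypothesis supplies a unique $b \in \cB$ realizing the required difference, and then $c$ is forced. Hence exactly one block covers the pair, as an HTD requires.

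The remaining delicate point is to verify that no block covers a pair of points in different groups but the same hole, i.e.\ that $b_r - b_s \notin H$ for any $b \in \cB$ and any $r < s$. For this I would count blocks: an HTD$(k, h^{g/h})$ has exactly $h^2 (g/h)(g/h - 1) = g(g-h)$ blocks, which must equal the $|\cB| \cdot g$ blocks produced by the additive development (the $G$-action on blocks is free since $b + c = b$ forces $c = 0$). Thus $|\cB| = g - h$, and combined with the hypothesis---each element of $G \setminus H$ realized exactly once as a difference $b_r - b_s$---the map $b \mapsto b_r - b_s$ is forced to be a bijection from $\cB$ onto $G \setminus H$. Consequently no difference lies in $H$ and no same-hole pair is ever covered. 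This $H$-avoidance is the only step beyond a routine translation between differences and blocks; everything else follows directly.
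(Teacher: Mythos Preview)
The paper does not supply its own proof of this lemma; it simply cites \cite{DS}. Your construction---taking points $[k]\times G$, groups $\{i\}\times G$, holes $[k]\times(c+H)$, and developing $\cB$ additively over $G$---is exactly the standard one, and your verification that every pair of points in different groups and different holes lies in a unique block is correct.

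The counting argument you use for the remaining step, however, is circular. You assert that an HTD$(k,h^{g/h})$ has $g(g-h)$ blocks, ``which must equal the $|\cB|\cdot g$ blocks produced,'' and conclude $|\cB|=g-h$. But you have not yet established that the construction \emph{is} an HTD, so you are not entitled to its block count. In fact the hypothesis as literally stated does not preclude adjoining to $\cB$ an extra tuple all of whose pairwise differences lie in $H$ (for instance the zero tuple): the condition on each $a\in G\setminus H$ is untouched, yet the developed blocks now cover same-hole pairs and the result fails to be an HTD. So $b_r-b_s\notin H$ genuinely cannot be \emph{derived} from what is written; it has to be assumed. The intended reading---and the formulation of a holey difference matrix in \cite{DS}---carries the side condition $|\cB|=g-h$ (equivalently, that no difference $b_r-b_s$ lies in $H$). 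Once that is taken as part of the hypothesis, each map $b\mapsto b_r-b_s$ is immediately a bijection $\cB\to G\setminus H$, your ``delicate point'' evaporates, and the rest of your argument goes through without any counting.
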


To further set up the construction, fix integers $d,h \ge 2$, and put $\lam=h^{d-1}$.  Let $q\equiv 1\pmod{\lam}$ be a prime power.
Let $\omega$ be a multiplicative generator of $\F_q$, and define $C_0:= \langle \omega^{\lam} \rangle$
to be the index-$\lam$ subgroup of $\F_q^\times$.   
For $1\leq i < \lam$, we denote the coset $\omega^i C_0$ by $C_i$.
Let $k \le h^d$.
Given two $k$-tuples $t \in X^k$ and $u \in Y^k$, define $t \circ u=((t_i,u_i) : 1 \le i \le k) \in (X \times Y)^k$.  We take $X=H=\F_h$ and $Y=F_q$ in what follows, so that the group in Lemma~\ref{difcon} is $G= \F_h \times \F_q$ with subgroup $\F_h \times \{0\}$.

Letting $t_1,t_2,\dots, t_{h^d}$ denote the rows of $T_h(d)$, our construction amounts to a selection of vectors $u_1,u_2,\dots,u_{h^d} \in \F_q^k$ such that 
$$\mathcal{B} = \{ t_i \circ (x u_i) : x \in C_0, 1 \le i \le h^d \}$$
satisfies the hypotheses of Lemma~\ref{difcon}.  
Dinitz and Stinson \cite{DS} and later, Abel and Zhang \cite{AZ}, reduce the search for such vectors $u_i$ by assuming they have the form 
$$
u_1, \omega u_1, \omega^2 u_1, \dots, \omega^{\lam-1}u_1, \dots, 
u_h, \omega u_h, \omega^2 u_h, \dots, \omega^{\lam-1}u_h.$$
With this reduction, $\mathcal{B}$ produces an HTD if the quotients $(u_{ir}-u_{is})(u_{jr}-u_{js})^{-1}$ lie in certain cosets of $C_0$ for each pair $r,s$ with $1 \le r < s \le k$. In more detail, fix two such column indices and consider two blocks $b,b' \in \cB$ arising from a choice of two rows of $T_d(h)$.  When these rows are in the same block of $\lam=h^{d-1}$ consecutive rows, we automatically avoid $b_r-b_s = b'_r-b'_s$ because of the different powers of $\omega$ multiplying the same $u_i$.  On the other hand, when these rows are in, say, the $i$th block and $j$th block of $\lam$ rows, $i\neq j$, we must ensure that the quotient  $(u_{ir}-u_{is})(u_{jr}-u_{js})^{-1}$ avoids those cyclotomic classes indexed by $e'-e \pmod{\lam}$, whenever $\omega^e u_i$ and $\omega^{e'} u_j$ index rows of $T_d(h)$ which have equal $(r,s)$-differences.
It is routine (but somewhat tedious) exercise to characterize the `allowed cosets', either computationally for specific $h,d$ or in general.  We omit the details, but point out that, for each $r$ and $s$, an arithmetic progression of cyclotomic classes (with difference a power of $h$) is available.

Now, given a table of allowed cosets, the vectors $u_1,\dots,u_h \in \F_q^k$ can be chosen one at a time, where each new vector has coset restrictions on its $(r,s)$-differences.  The guarantee of Lemma~\ref{wilson-cyc} can be used for this purpose (giving an alternate proof of Corollary~\ref{cyclotomic-hmols} in the case of prime $h$).  However, in practice it often suffices to take significantly smaller values of $q$.

\begin{ex}
\label{401}
To illustrate the method, we construct 9 HMOLS of type $2^{401}$ in $\F_2 \times \F_{401}$; that is, we consider $h=2$, $q=401$.  Instead of using all $16$ columns of the template $T_2(4)$, we require only $9+2=11$, as indicated below.  Let
\begin{align*}
u_1&=(284, 136, 249, 334, 1, 202, 140, 307, -, 35, 312, -, 0, -, -, -)\\
\text{and}~u_2&=(283, 297, 137, 60, 1, 210, 102, 39, -, 241, 111,-, 0, -, -, -).
\end{align*}
It can be verified that the quotients $(u_{2r}-u_{2s})(u_{1r}-u_{1s})^{-1}$ all lie in allowed cosets for $T_2(4)$ for any distinct indices $r$ and $s$ such that our vectors are nonblank.  (As an explanation for the unnatural ordering of entries, it turns out that a column-permutation of the template $T_2(4)$ was more convenient for the computations, at least with our approach.)
\end{ex}

To our knowledge, Example~\ref{401} provides the first (explicit) construction of more than $6$ HMOLS of type $2^n$ for any $n>1$.

\section{Recursive constructions}
\label{sec:constructions}

As we move away from prime powers, we present a product construction which scales the number of holes.  The idea is to join (copies of) equal-sized HMOLS on the diagonal and ordinary MOLS off the diagonal.  Our proof uses the language of transversal designs.

\begin{prop}
\label{prod2}
$N(h^{mn}) \ge \min \{N(1^m),N(hn),N(h^n) \}$.
\end{prop}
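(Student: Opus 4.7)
The plan is to pass to the transversal design formulation and build the desired HTD on a Cartesian-product point set. Set $k = \min\{N(1^m), N(hn), N(h^n)\}$, and aim to construct an HTD$(k+2, h^{mn})$ on the point set $[k+2] \times [m] \times [n] \times [h]$, with the $k+2$ groups being $\{i\} \times [m] \times [n] \times [h]$ and the $mn$ holes of size $h$ being $\{(a,b)\} \times [h]$ for $(a,b) \in [m] \times [n]$. A pair of points $(i,a_1,b_1,c_1)$, $(j,a_2,b_2,c_2)$ must be covered exactly once precisely when $i \ne j$ and $(a_1,b_1) \ne (a_2,b_2)$. I split such pairs according to whether $a_1 = a_2$ (``diagonal'') or $a_1 \ne a_2$ (``off-diagonal'').

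For the diagonal case, invoke the hypothesis $N(h^n) \ge k$: for each $a \in [m]$, place a copy of an HTD$(k+2, h^n)$ on $[k+2] \times \{a\} \times [n] \times [h]$, using the $n$ holes $\{(a,b)\} \times [h]$. These collectively cover exactly once every pair whose $a$-coordinates coincide but whose $b$-coordinates differ, and produce no covers within a single hole.

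For the off-diagonal case, invoke $N(1^m) \ge k$ to fix an HTD$(k+2, 1^m)$ on $[k+2] \times [m]$. Each of its blocks $B$ selects one point $(i, \sigma_B(i))$ from each group $i \in [k+2]$. On the expanded set $\bigcup_{i=1}^{k+2} \{i\} \times \{\sigma_B(i)\} \times [n] \times [h]$, place a TD$(k+2, hn)$ (available since $N(hn) \ge k$), where the $i$-th group is $\{i\} \times \{\sigma_B(i)\} \times [n] \times [h]$. Given any pair $(i,a_1,b_1,c_1)$, $(j,a_2,b_2,c_2)$ with $a_1 \ne a_2$ and $i \ne j$, the pair $(i,a_1),(j,a_2)$ lies in a unique block $B$ of the HTD$(1^m)$ (since $a_1,a_2$ are in distinct holes there), and then the TD$(k+2, hn)$ attached to $B$ covers the expanded pair exactly once, as the points lie in distinct groups of that TD.

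The main thing to verify is that the two cases between them cover each admissible pair exactly once and no within-hole pair at all, which is immediate from the case split once the structure is set up. I would include a block-count sanity check, noting that the diagonal contributes $m \cdot h^2 n(n-1)$ blocks and the off-diagonal contributes $m(m-1) \cdot (hn)^2$ blocks, summing to $h^2(mn)(mn-1)$, the correct total for an HTD$(k+2, h^{mn})$. There is no real technical obstacle here; the only subtlety is keeping the indexing straight so that ``holes'' in the three input objects play the right roles (singleton holes in the $1^m$ layer, $n$ holes of size $h$ in each diagonal block, and no holes at all in the off-diagonal TD expansion).
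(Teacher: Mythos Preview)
Your proof is correct and follows essentially the same approach as the paper's: both pass to the HTD formulation on a Cartesian-product point set, lay down a copy of an HTD$(k,h^n)$ on each ``diagonal'' layer indexed by $m$, and use the blocks of an HTD$(k,1^m)$ to organize off-diagonal TD$(k,hn)$ ingredients. The only differences are cosmetic (your $k+2$ is the paper's $k$, and the coordinate ordering differs), and you additionally supply a block-count check that the paper omits.
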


\begin{proof}
We show that the existence of a HTD$(k,h^{mn})$ is implied by the existence of an HTD$(k,1^m)$, TD$(k,hn)$ and HTD$(k,h^n)$.  Let us take as points $[k] \times H \times M \times X$, where $|H|=h$, $|M|=m$, $|X|=n$.  The groups of our TD are $\{i\} \times H \times M \times X$ and the holes are $[k] \times H \times \{w\} \times \{x\}$, where $i \in [k]$, $w \in M$, and $x \in X$.  We construct the block set in two pieces.  First, on each layer of points of the form $[k] \times H \times \{w\} \times X$, where $w \in M$, we include the blocks of an HTD$(k,h^n)$ with groups $\{i\} \times H \times \{w\} \times X$ and holes $[k] \times H \times \{w\} \times \{x\}$. Second, let us take an HTD$(k,1^m)$ on $[k] \times M$ and, for each block $B=\{(i,w_i): i = 1,\dots,k\}$, include the blocks of a TD$(k,hn)$ on $\cup_i \{i\} \times H \times \{w_i\} \times X$, where in each case we use the natural group partition induced by first coordinates.

It remains to verify that the block set as constructed covers every pair of points as needed for an HTD$(k,h^{mn})$.  To begin, since each of our ingredient blocks is transverse to the group partition, it is clear that two distinct points in the same group are together in no block.  Moreover, two distinct points in the same hole appear in the same HTD$(k,h^n)$, the hole partition of which is inherited from our resultant design.  Therefore, such elements are also together in no block.
Consider then, two elements $(i_1,j_1,w_1,x_1)$ and $(i_2,j_2,w_2,x_2)$ with $i_1 \neq i_2$ and $(w_1,x_1) \neq (w_2,x_2)$.  If $w_1=w_2$, this pair of points occurs in the same HTD$(k,h^n)$, and thus in exactly one block. On the other hand, if $w_1 \neq w_2$, we first locate the unique block of the HTD$(k,1^m)$ containing $(i_1,w_1)$ and $(i_2,w_2)$, and then, within the corresponding TD$(k,hn)$, identify the unique block containing our two given points.  We remark that the holes can be safely ignored in this latter case, since we are assuming $w_1 \neq w_2$.
\end{proof}

Although the idea behind the construction in Proposition~\ref{prod2} is very standard, we could not find this result mentioned explicitly in the literature.

To set up our next construction, we recall that an incomplete latin square can have holes that partition a proper subset of the index set.  In particular, we consider sets of mutually orthogonal $n \times n$ incomplete latin squares with a single common $h \times h$ hole for integers $n>h>0$.  The maximum number of squares in such a set is commonly  denoted $N(n;h)$.

\begin{ex}
A noteworthy value is $N(6;2)=2$ in spite of the nonexistence of a pair of orthogonal latin squares of order six.  The squares, with common hole $H=\{1,2\}$, are shown below.
\begin{center}
\begin{tabular}{|cccccc|}
\hline
&&3&4&5&6\\
&&4&3&6&5\\
6&3&5&1&4&2\\
4&5&6&2&3&1\\
3&6&2&5&1&4\\
5&4&1&6&2&3\\
\hline
\end{tabular}
\hspace{1.5cm}
\begin{tabular}{|cccccc|}
\hline
&&3&5&6&4\\
&&4&6&5&3\\
3&5&2&4&1&6\\
6&4&1&3&2&5\\
4&6&5&1&3&2\\
5&3&6&2&4&1\\
\hline
\end{tabular}
\end{center}
\end{ex}

Given a set of $k-2$ mutually orthogonal incomplete latin squares of type $(n;h)$, reading blocks as $k$-tuples produces 
an \emph{incomplete transversal design}, abbreviated either ITD$(k,(n;h))$ or $\text{TD}(k,n)-\text{TD}(k,h)$. The latter notation is not meant to suggest  that a TD$(k,h)$ exists as a subdesign, but rather that two elements from the hole are uncovered by blocks.
The interested reader is referred to \cite{Handbook,MOLStable} for more information and references on these objects.

We now extend Proposition~\ref{prod2} to get an analog of Wilson's MOLS construction \cite[Theorem 2.3]{WilsonMOLS}.  

\begin{prop}
\label{wilsonish-cons}
For $0 \le u < t$,
\begin{equation*}
N(h^{mt+u}) \ge \min \{N(t)-1,N(h^m),N(hm),N(hm+h;h),N(h^u) \}.
\end{equation*}
\end{prop}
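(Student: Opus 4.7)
The plan is to adapt R.M.~Wilson's recursive MOLS construction to the HMOLS setting.  Let $k-2 := \min\{N(t)-1, N(h^m), N(hm), N(hm+h;h), N(h^u)\}$, so that each listed ingredient furnishes a design with $k$ groups, and the goal is to construct an HTD$(k,h^{mt+u})$.  From $N(t)\ge k-1$, take $k-1$ MOLS of order $t$ and, by independently relabeling the symbols of each, arrange that every one is \emph{idempotent} (i.e., $L_i(a,a)=a$ for all $a$).  Orthogonality is preserved by this relabeling, and the resulting TD$(k+1,t)$ then has the crucial property that the \emph{only} blocks with a repeated label among their first $k$ coordinates are the $t$ ``constant'' blocks $B_w=\{(i,w):i\in[k+1]\}$ for $w\in[t]$.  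Truncate the $(k+1)$-st group to a subset $L\subseteq[t]$ of size $u$: blocks whose last coordinate lies in $L$ retain size $k+1$, while the rest collapse to size $k$.

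Set up the target HTD on $[k]\times X$ where $X=([t]\times[m]\times[h])\cup(L\times[h])$, with $h$-holes indexed by $([t]\times[m])\cup L$.  Populate its block set from four classes: (i) for every $w\in[t]$, an HTD$(k,h^m)$ on the slice $[k]\times\{w\}\times[m]\times[h]$; (ii) for every non-constant block $B$ of the truncated TD, a TD$(k,hm)$ (if $B$ has size $k$) or an ITD$(k,(hm+h;h))$ (if $B$ has size $k+1$, with its hole equal to the infinity portion of the flat) placed on the inflated flat $\bigcup_i\{i\}\times\{y_i(B)\}\times[m]\times[h]$; (iii) for every kept constant block $B_w$ (with $w\in L$), an ITD$(k,(hm+h;h))$ placed on the combined flat of slice $w$ and infinity $w$, arranged to cover exactly the slice--infinity pairs --- the slice--slice pairs of this flat being already covered by (i); (iv) an HTD$(k,h^u)$ on the infinity $[k]\times L\times[h]$, to cover pairs between distinct infinity holes.

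Verification splits by pair type.  Same-slice pairs with different $r$-coordinates are covered by (i).  For pairs in different slices, the unique truncated-TD block containing both coordinates is non-constant (by idempotency), and the flat placement in (ii) supplies the coverage.  A slice--infinity pair is handled by (ii) when the slice label and infinity label disagree (the associated TD block is non-constant) and by (iii) when they agree (the associated block is the constant $B_w$ with $w$ identified with the infinity label via truncation).  Infinity pairs between distinct holes are covered by (iv); same-hole pairs are correctly left uncovered.  The main obstacle is case (iii): the flat of a kept constant block overlaps the entirety of slice $w$, so the ITD$(k,(hm+h;h))$ must cohere with the already-placed HTD$(k,h^m)$ of (i) --- essentially playing the role of a ``missing'' HTD$(k,h^{m+1})$ on that flat --- without overcounting any slice--slice pair.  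Idempotency of the input MOLS (which confines every label coincidence to a constant block) together with the precise ingredient $N(hm+h;h)$ (rather than the stronger but unavailable $N(h^{m+1})$) is what makes this coordination possible, and thereby justifies the exact form of the minimum in the proposition.
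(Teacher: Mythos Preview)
Your construction has a genuine gap in case~(iii), and you have in fact flagged it yourself without resolving it.  An ITD$(k,(hm+h;h))$ is a fixed combinatorial object: its blocks cover \emph{every} pair of points from distinct groups except pairs lying entirely in the $h$-hole.  In particular it covers all slice--slice pairs on your combined flat, not only the slice--infinity pairs.  There is no way to ``arrange'' it otherwise, and no amount of idempotency of the input MOLS changes what the ITD covers.  So for each $w\in L$ you double-cover every slice--slice pair between different groups in slice~$w$ (once by the HTD$(k,h^m)$ from (i), once by the ITD from (iii)).  Omitting~(i) for $w\in L$ does not help either, since the ITD would then also cover the within-hole slice pairs that an HTD must leave uncovered; what you would really need on that flat is an HTD$(k,h^{m+1})$, which is not among your ingredients.

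The repair is structural and is exactly where the hypothesis $u<t$ enters.  In your idempotent TD$(k+1,t)$, the constant block $B_w$ has last coordinate $w$, so every point of the $(k{+}1)$st group is hit by some constant block and the overlap is unavoidable.  Instead, take any TD$(k+1,t)$, fix a single point $x_*$ in the extra group, and let your ``constant'' parallel class be the set of blocks through $x_*$; after a harmless relabeling these have first $k$ coordinates $(w,w,\dots,w)$.  Now choose the truncation set $L$ of size $u$ inside the extra group \emph{avoiding} $x_*$ (possible precisely because $u<t$).  Then no constant block meets $L$, case~(iii) never occurs, and your cases (i), (ii), (iv) already give a complete verification.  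This is how the paper proceeds.
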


\begin{proof}
We show that the existence of an HTD$(k,h^{mt+u})$ is implied by the existence of a TD$(k+1,t)$, HTD$(k,h^m)$, TD$(k,hm)$, HTD$(k,h^u)$ and an ITD$(k,(hm+h;h))$.
We remark that the first of these is equivalent to a resolvable TD$(k,t)$. 

The set of points for our design is $[k] \times H \times (M \times X \cup Y)$, where $|H|=h$, $|M|=m$, $|X|=t$, and $|Y|=u$.  Similar to the proof of Proposition~\ref{prod2}, the groups are induced by first coordinates, and the holes are `copies of $H$'.  

Begin with a TD$(k+1,t)$ on $([k] \cup \{0\}) \times X$, say with blocks $\mathcal{A}$.  Let $x_* \in X$ and assume, without loss of generality, that the blocks in $\mathcal{A}$ incident with $(0,x_*)$ are of the form $\{(i,x):i=1,\dots,k\} \cup \{(0,x_*)\}$.  In other words, in the induced resolvable TD$(k,t)$, assume one parallel class is labeled as $[k] \times \{x\}$, $x \in X$. Let us identify $Y$ with any $u$-element subset of $\{0\} \times (X \setminus \{x_*\})$.  

For each block in $\mathcal{A}$ of the form $\{(i,x):i=1,\dots,k\} \cup \{(0,x_*)\}$, include the blocks of an HTD$(k,h^m)$, on $[k] \times H \times M \times \{x\}$ with groups and holes as usual.  
Consider now a block $B \in \mathcal{A}$ which does not contain $(0,x_*)$, say $B=\{(i,x_i):i=0,1,\dots,k\} \in \mathcal{A}$ where $x_0 \neq x_*$.
Put $y_0 = (0,x_0)$.   If $y_0 \not\in Y$ (that is if $B$ does not intersect $Y$), we include the blocks of a TD$(k,hm)$ on $\cup_{i=1}^k \{i\} \times H \times M \times \{x_i\}$ with groups and holes as usual.  On the other hand, if $y_0 \in Y$ (that is if $B$ intersects $Y$), we include the blocks of an ITD$(k,(hm+h;h))$ on the points $\cup_{i=1}^k \{i\} \times H \times (M \times \{x_i\} \cup \{y_0\})$ and such that the hole of this ITD occurs as
$\cup_{i=1}^k \{i\} \times H \times  \{y_0\}$.  To finish the construction, we include the blocks of an HTD$(k,h^u)$ on $[k] \times H \times Y$, where again the natural partition into groups and holes is used.  We have used four types of blocks, to be referenced below in the order just described.

As a verification, we consider two elements in different groups and holes.   Suppose $i$ and $i'$ index two different groups.  There are cases to consider. 
Consider first a pair of points of the form $(i,j,w,x)$ and $(i',j',w',x')$, where $(w,x) \neq (w',x')$.  If $x=x'$, then the two points appear together in exactly one block of the first kind.  If $x \neq x'$, then we consider the unique block $B \in \mathcal{A}$ containing $(i,x)$ and $(i',x')$.  Our two points are either in exactly one block of the second kind if $B \cap Y = \emptyset$ or exactly one block of the third kind otherwise.
Consider now the points $(i,j,w,x)$ and $(i',j',y')$, where $y' \in Y$.  There is exactly one block of the TD$(k+1,t)$ containing $(i,x)$ and $y_0$.  Examining the ITD prescribed by the construction, the points $(i,j,w,x)$ and $(i',j',y')$ appear together in exactly one block of the third kind, since $i \neq i'$ and only one of these points belongs to the hole. 
Finally, two points $(i,j,y)$ and $(i',j',y')$  appear together in one (and only one) block of the fourth kind if and only if $y \neq y'$.
\end{proof}

\begin{rk}
The construction in Proposition~\ref{prod2} is just the specialization $Y=\emptyset$ of that of Proposition~\ref{wilsonish-cons}.  However, we have kept the former stated separately since it requires no assumption on $N(hm+h;h)$.
\end{rk}

\section{Lower bounds}
\label{sec:proof}

\subsection{Preliminary bounds}
To make use of Proposition~\ref{wilsonish-cons}, it is helpful to have a lower bound on $N(n;h)$ resembling Beth's bound for $N(n)$.

\begin{thm}
\label{imols-bound}
Let $h$ be a positive integer.  Then $N(n;h) > n^{1/29.6}$ for sufficiently large $n$. 
\end{thm}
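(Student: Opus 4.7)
The plan is to mimic Beth's proof of $N(n) > n^{1/14.7994}$ with one extra layer of truncation, which halves the exponent. Since $2 \cdot 14.7994 = 29.5988 < 29.6$, there is just enough slack to reach the target exponent $1/29.6$ for $n$ sufficiently large. The key ingredient is an incomplete analogue of Wilson's MOLS product construction: for $0 \le u \le t$,
$$N(mt + h + u;\,h) \ge \min\{N(t)-2,\,N(m),\,N(m+1),\,N(m+2),\,N(u)\}.$$
One proves this by doubly truncating a TD$(k+2, t)$ --- one group down to $h$ points, which becomes the common hole of the output, and another down to $u$ points, to be filled by an ordinary MOLS of order $u$ --- and then inflating the remaining $k$ groups by a factor $m$. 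Each block is filled by an ordinary TD on $k+2$ groups of size $m$, $m+1$, or $m+2$, according to how many of the two truncated groups it meets.

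Given large $n$, set $k := \lceil n^{1/29.6}\rceil$ and pick a prime power $t = (1+o(1))n^{1/2}$ with $t \ge k+3$; its existence follows from any standard prime-gap estimate. Then $N(t) - 2 = t-3 \ge k$. Write $n - h = mt + u$ with $0 \le u < t$; if $u$ falls below $n^{1/2}/2$, decrement $m$ by one so that $u$ is replaced by $u + t$, guaranteeing $u \in [n^{1/2}/2,\,2n^{1/2}]$. Because $n \ge k^{29.6} > k^{29.5988} = k^{2\cdot 14.7994}$, each of $m$, $m+1$, $m+2$, and $u$ exceeds $k^{14.7994}$, so Beth's bound $N(r) > r^{1/14.7994}$ produces $N(m), N(m+1), N(m+2), N(u) \ge k$. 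Substituting into the displayed inequality yields $N(n;h) \ge k > n^{1/29.6}$, as required.

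The main obstacle is the Wilson-style construction itself. One must verify carefully that the truncated-and-inflated design, after filling by ordinary TDs of three possible sizes, covers every cross-group pair of points exactly once unless both lie in the common hole. The two truncated groups play asymmetric roles --- one remains empty as the hole, the other is merely a smaller filled-in group --- and the block-fillings must respect this asymmetry, which forces the ingredient $N(m+2)$ (blocks meeting both truncated groups) in addition to $N(m)$ and $N(m+1)$. A secondary subtlety is the arithmetic flexibility: one must know that prime powers are dense enough near $\sqrt{n}$ and that the residue $u$ can always be pushed into a convenient window by the $m \mapsto m-1$, $u \mapsto u+t$ adjustment --- both routine.
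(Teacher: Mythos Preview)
Your proposal is correct and follows essentially the same route as the paper: both invoke the identical Wilson-style truncation construction (the paper cites it as a variant of \cite[Theorem~2.4]{WilsonMOLS}) and then feed Beth's bound into each ingredient $N(m),N(m+1),N(m+2),N(u),N(t)$. One small slip: from $k=\lceil n^{1/29.6}\rceil$ you get $n\le k^{29.6}$, not $n\ge k^{29.6}$; what you actually have is $n>(k-1)^{29.6}$, which for large $k$ still exceeds $k^{29.5988}$, so the argument goes through unchanged.
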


\begin{proof}
We use the construction of \cite[Theorem 2.4]{WilsonMOLS}.  A minor variant gives that, for $0 \le u,v \le t$,
\begin{equation}
\label{imols-cons}
N(mt+u+v;v) \ge \min \{N(m),N(m+1),N(m+2),N(t)-2,N(u)\}.
\end{equation}
(To clarify, the cited theorem `fills the hole' of size $v$ so that the left side becomes $N(mt+u+v)$ and the minimum on the right side includes $N(v)$.)
Put $v=h$ and suppose $k$ is a large integer.  For $n \ge k^{29.6}$, write $n = mt+u$, where $m,t,u \ge k^{14.7995}$.  Then, from Beth's inequality, there exist $k$ MOLS of each of the side lengths $m,m+1,m+2,u$, and also $k+2$ MOLS of side length $t$.
It follows from (\ref{imols-cons}) that $N(n+h;h) \ge k$, as required.
\end{proof}

The forthcoming proof of Theorem~\ref{main} makes use of two number-theoretic lemmas which are minor variants of classical results.  The first of these concerns the selection of a prime, with a congruence restriction, in a large and wide enough interval.

\begin{lemma}
\label{Dirichlet-estimate}
For any sufficiently large integer $M$ and any real number $x>e^M$ there exists a prime $p \equiv 1 \pmod{M}$ satisfying $x < p \le 2x$.
\end{lemma}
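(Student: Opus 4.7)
My plan is to deduce this as a direct consequence of the Siegel--Walfisz theorem on primes in arithmetic progressions. Recall that this theorem asserts: for any fixed $A>0$ there exists $c=c(A)>0$ such that, uniformly in the range $1\le M\le (\log y)^A$,
$$\pi(y;M,1) \;=\; \frac{1}{\varphi(M)}\int_2^y \frac{dt}{\log t} \;+\; O\!\left(y\exp(-c\sqrt{\log y})\right),$$
where $\pi(y;M,1)$ counts primes $p\le y$ with $p\equiv 1\pmod M$. The hypothesis $x>e^M$ gives $M<\log x$, so (taking, say, $A=2$) the Siegel--Walfisz range condition is satisfied with room to spare at both $y=x$ and $y=2x$, provided $M\ge 2$.

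Next, I would subtract the asymptotic at $y=x$ from the one at $y=2x$:
$$\pi(2x;M,1)-\pi(x;M,1) \;=\; \frac{1}{\varphi(M)}\int_x^{2x}\frac{dt}{\log t} \;+\; O\!\left(x\exp(-c\sqrt{\log x})\right).$$
The integral is at least $x/(2\log(2x))$, and $\varphi(M)\le M<\log x$, so the main term is bounded below by a positive absolute constant times $x/(\log x)^2$. For the error, the inequality $x>e^M$ implies $\sqrt{\log x}>\sqrt M$, so $\exp(-c\sqrt{\log x})<\exp(-c\sqrt M)$. The latter decays faster than any polynomial in $\log x$ as $M\to\infty$, so for $M$ sufficiently large the error term is $o(x/(\log x)^2)$. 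The count is therefore strictly positive, yielding at least one prime $p\equiv 1\pmod M$ in the interval $(x,2x]$.

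The only real obstacle is the ineffective constant $c$ in Siegel--Walfisz, which in turn renders the threshold ``$M$ sufficiently large'' in the statement of Lemma~\ref{Dirichlet-estimate} ineffective. Since the lemma is used only to support the asymptotic bound of Theorem~\ref{main}, this is of no consequence. If an effective version were desired, one could substitute a fully effective quantitative form of the prime number theorem for arithmetic progressions (at the cost of a slightly narrower range of moduli), but no such refinement is needed here.
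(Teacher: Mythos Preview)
Your argument is correct and follows the same skeleton as the paper's proof: estimate $\pi(2x;M,1)-\pi(x;M,1)$ as a main term of order $x/(\varphi(M)\log x)\gg x/(\log x)^2$ (using $\varphi(M)<M<\log x$) minus an error term, and check the error is smaller. The only real difference is the input estimate. The paper cites the explicit bound of Bennett, Martin, O'Bryant and Rechnitzer, which gives $|\pi(x;M,1)-\mathrm{Li}(x)/\varphi(M)|\le \tfrac{1}{160}\,x/(\log x)^2$ for $M>10^5$ and $x>e^M$; this yields an \emph{effective} threshold on $M$. You instead invoke Siegel--Walfisz, whose constant (and hence your threshold) is ineffective, as you correctly flag. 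For the purposes of Theorem~\ref{main} either route suffices; the paper's choice has the side benefit that, in principle, one can later extract explicit bounds for specific $k$ (as alluded to in Section~5), whereas your version would not permit that without swapping in an effective input.

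One minor stylistic point: the detour through $\exp(-c\sqrt{M})$ is unnecessary. You simply need $(\log x)^2\exp(-c\sqrt{\log x})\to 0$ as $\log x\to\infty$, and $\log x>M\to\infty$ forces this directly.
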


\begin{proof}
We use a result \cite[Theorem 1.3]{Dirichlet-bounds} of Bennett, Martin, O'Bryant and Rechnitzer concerning the prime-counting function 
$$\pi(x;q,a) := \#\{p \le x: p~\text{ is prime}, p \equiv a \pmod{q}\}.$$
With $q=M$ and $a=1$, their estimate implies
\begin{equation*}
\left| \pi(x;M,1) - \frac{{\mathrm{Li}}(x)}{\phi(M)} \right| \le \frac{1}{160} \frac{x}{(\log x)^2}
\end{equation*}
for $M>10^5$ and all $x > e^M$. Since $\mathrm{Li}(x)\sim x/\log(x)$ and $\phi(M) < \log x$, a routine calculation gives $\pi(2x;M,1)-\pi(x;M,1) \ge 1$ for sufficiently large $x$ and $M$.
\end{proof}

\begin{rk}
Lemma~\ref{Dirichlet-estimate} actually holds with `2' replaced by any constant greater than one; however, the present form suffices for our purposes. 
\end{rk}

Next, we 
have a Frobenius-style representation theorem for large integers.

\begin{lemma}
\label{Frobenius}
Let $a,b$ and $C$ be positive integers with $\gcd(a,b)=1$.  Any $n > a(b+1)(b+C)$ can be written in the form $n=ax+by$ where $x$ and $y$ are integers satisfying $x \ge C$ and $y > ax$.
\end{lemma}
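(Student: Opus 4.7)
The plan is to reduce both conditions on the pair $(x,y)$ to constraints on $x$ alone, and then to invoke coprimality of $a$ and $b$ to find an admissible $x$ inside a short window. Given an integer $x \ge 0$, the value $y = (n-ax)/b$ is an integer precisely when $ax \equiv n \pmod{b}$, and the inequality $y > ax$ rearranges to $n - ax > abx$, i.e.\ $x < n/(a(b+1))$. So I want to produce a single integer $x$ satisfying
\[
C \le x < \frac{n}{a(b+1)} \qquad \text{and} \qquad ax \equiv n \pmod{b}.
\]

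Since $\gcd(a,b) = 1$, multiplication by $a$ is a bijection on $\mathbb{Z}/b\mathbb{Z}$, so the congruence $ax \equiv n \pmod{b}$ pins $x$ down to a single residue class modulo $b$. Any $b$ consecutive integers form a complete residue system mod $b$, so exactly one element of the window $\{C, C+1, \dots, C+b-1\}$ lies in that class; call it $x$. To check that this $x$ also satisfies the upper bound, use $x \le C+b-1 < C+b$ together with the hypothesis $n > a(b+1)(b+C)$ to conclude
\[
a(b+1)x < a(b+1)(C+b) < n,
\]
which is exactly $x < n/(a(b+1))$. Setting $y := (n-ax)/b$ then yields a positive integer with $y > ax$, completing the construction.

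The only real obstacle is calibrating the hypothesis correctly: the threshold $a(b+1)(b+C)$ is engineered precisely so that the window of $b$ consecutive integers beginning at $C$ fits strictly inside $[C,\, n/(a(b+1)))$, which is what guarantees that the correct residue class is hit by some admissible $x$. Once the two conditions have been combined into a simultaneous congruence and interval constraint, the rest is a one-line pigeonhole argument on residues; there is no genuinely hard step.
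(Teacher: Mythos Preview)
Your proof is correct and follows essentially the same approach as the paper: pick $x$ in a window of $b$ consecutive integers starting at (or just above) $C$ hitting the unique residue class with $ax\equiv n\pmod b$, then set $y=(n-ax)/b$ and verify $y>ax$ via the hypothesis $n>a(b+1)(b+C)$. The only cosmetic difference is that the paper uses the window $\{C+1,\dots,C+b\}$ while you use $\{C,\dots,C+b-1\}$.
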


\begin{proof}
The integers $a(C+1), a(C+2), \dots, a(C+b)$ cover all congruence classes mod $b$.  Suppose $n \equiv a(C+j) \pmod{b}$, where $j \in \{1,\dots,b\}$.  Put $x=C+j$ and
$y=(n-ax)/b$.  Then $y$ is an integer with
\begin{equation*}
y > \frac{a(b+1)(C+b)-a(C+b)}{b} = a(C+b) \ge ax.\qedhere
\end{equation*}
\end{proof}

\subsection{Proof of the main result}

We are now ready to prove our asymptotic lower bound on HMOLS of type $h^n$.

\begin{proof}[Proof of Theorem~\ref{main}]
Put  $M=\lam(h,k+2)$, as defined in Corollary~\ref{prod-inf}. Note that $M \le (k+2)^{\omega(h)}$.  Let $K$ denote the ceiling of 
$k^{(\omega(h)+\epsilon/4)k^2}$, which we note for large $k$ exceeds both $e^M$ and $M^{(k+2)(k+1)}$.

Using Lemma~\ref{Dirichlet-estimate}, choose two primes $q_1,q_2 \equiv 1 \pmod{M}$ where $q_2 \in (K,2K]$ and $q_1 \in (2K,4K]$.
With $m=q_2$, we have $N(h^m) \ge k$ from Theorem~\ref{cyclotomic-hmols},  $N(hm) \ge k$ from Beth's inequality, and $N(hm+h;h) \ge k$ from Theorem \ref{imols-bound}.  The latter two bounds use the assumption that $k$ is large.

From the hypothesis on $n$ and choice of $q_i$, we have for large $k$,
$$n > k^{(3+\epsilon)\omega(h)k^2} > 17K^3 > q_1(q_2+1)(q_2+k^{14.8}).$$
Using Lemma~\ref{Frobenius},  write $n=q_1s+q_2t$, where $s,t$ are integers satisfying $s \ge k^{14.8}$ and $t > q_1s$.  Put $u=q_1s$ so that, with this alternate notation, we have $n=mt+u$ with $t>u$.  Observe that $N(h^u) \ge k$ from Proposition~\ref{prod2} with $s$ taking the role of $m$ and $q_1$ taking the role of $n$.  We additionally have $N(t) > k$ from Beth's inequality and our lower bound on $t$.

From the above properties of $m,t,u$, Proposition~\ref{wilsonish-cons} implies $N(h^n) \ge k$.
\end{proof}

\begin{ex}
We illustrate the proof method by computing an explicit  bound for the existence of six HMOLS of type $2^n$.  We show that $n> 8 \times 50 \times 148 = 59200$ suffices.

From \cite[Table 1]{ABG}, there exist six HMOLS of types $2^8$ and $2^{49}$.  (The former does not arise from the cyclotomic construction of Section 2, but it helps us optimize the bound.)  
From \cite[Table III.3.83]{Handbook}, we have $N(1^s) \ge 6$ for all $s \ge 99$.  It follows by Proposition~\ref{prod2} that $N(2^{8s}) \ge 6$ for all $s \ge 99$.  Put $m=49$ and note that
$N(2m)=N(98) \ge 6$ and $N(2m+2;2) = N(100;2) \ge 6$, where the latter appears in \cite[Table III.4.14]{Handbook}.
Write $n=8s+25t$ where $t >8s$.  From \cite[Table III.3.81]{Handbook}, we have $N(t) \ge 7$.  Letting  $h=2$ and $u=8s$, we conclude from Proposition~\ref{wilsonish-cons} that $N(2^n) \ge 6$.
\end{ex}

\subsection{Inverting the bound}

Here, we offer a lower bound on $N(h^n)$ in terms of $n$.

\begin{thm}\label{N-bound}
Let $h\geq 2$ be an integer and $\delta>2$ a real number.  Then $N(h^n)\ge (\log n)^{1/\delta}$ for all $n>n_0(h,\delta)$.
\end{thm}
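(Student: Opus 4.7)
The plan is to invert Theorem~\ref{main}: given a target $k$, Theorem~\ref{main} constructs HMOLS whenever $n$ exceeds $k^{(3+\epsilon)\omega(h)k^2}$, so we want to solve this inequality for $k$ in terms of $n$. First I would fix any $\epsilon>0$ (e.g.\ $\epsilon=1$ will do), set
\[
k=\bigl\lfloor (\log n)^{1/\delta}\bigr\rfloor,
\]
and apply Theorem~\ref{main} with this value of $k$. We have to verify two things: that $k>k_0(h,\epsilon)$ (which is automatic once $n$ is large since $k\to\infty$ with $n$), and that $n\ge k^{(3+\epsilon)\omega(h)k^2}$.

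Taking logarithms, the second condition becomes
\[
\log n \;\ge\; (3+\epsilon)\omega(h)\, k^2 \log k.
\]
With our choice $k\le (\log n)^{1/\delta}$, the right-hand side is bounded above by
\[
(3+\epsilon)\omega(h)\cdot (\log n)^{2/\delta}\cdot \frac{1}{\delta}\log\log n,
\]
so it suffices to show
\[
(\log n)^{1-2/\delta} \;\ge\; \frac{(3+\epsilon)\omega(h)}{\delta}\,\log\log n.
\]
Since $\delta>2$ gives $1-2/\delta>0$, the left side is a positive power of $\log n$, while the right side is only $O(\log\log n)$. Hence the inequality holds for all $n$ exceeding some threshold $n_0(h,\delta)$, and Theorem~\ref{main} then yields $N(h^n)\ge k=\lfloor(\log n)^{1/\delta}\rfloor$, giving the bound $N(h^n)\ge(\log n)^{1/\delta}$ after possibly inflating $n_0$ to absorb the floor.

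There is no real obstacle here; the argument is a routine asymptotic inversion. The only mild bookkeeping is tracking the floor in $k$ and confirming that the sub-dominant $\log\log n$ term on the right is absorbed by the positive power $(\log n)^{1-2/\delta}$ on the left, which is transparent as long as $\delta$ is held fixed with $\delta>2$. The dependence of $n_0$ on $h$ enters through $\omega(h)$ and through $k_0(h,\epsilon)$ from Theorem~\ref{main}.
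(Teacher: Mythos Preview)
Your proposal is correct and follows essentially the same route as the paper: both arguments take $k$ to be the largest integer not exceeding $(\log n)^{1/\delta}$ and verify the hypothesis of Theorem~\ref{main} by observing that $\log n\ge k^\delta$ dominates $(3+\epsilon)\omega(h)\,k^2\log k$ for large $k$ since $\delta>2$. The paper simply phrases the key comparison as $k^\delta>Ck^2\log k$ (with $C>3\omega(h)$) rather than substituting back in terms of $\log n$, but the content is identical.
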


\begin{proof}
If $k$ is an integer not exceeding the right side of the above bound, then $\log n \ge k^\delta > C k^2 \log k $ for any constant $C >3 \omega(h)$ and sufficiently large $k$.  The existence of $k$ HMOLS of type $h^n$ then follows from Theorem~\ref{main}.
\end{proof}

\begin{rk}
Various slightly better `inverse bounds' are possible.  For instance, we have
$N(h^n)\ge e^{\frac{1}{2}W(\log(n)/2\omega(h))}$ for sufficiently large $n$, where $W$ denotes Lambert's function, the inverse of $x \mapsto xe^x$.  The constants here represent one choice of many.
\end{rk}

\section{Future directions}

It would of course be desirable to produce a lower bound of the form $N(h^n) \ge n^{\delta}$ for some $\delta>0$.  This appears difficult using the presently available methods over finite fields, although a sophisticated randomized construction is plausible.

When $k$ is a fixed positive integer (rather than sufficiently large), our proof method can still compute, in principle, a lower bound on $n$ such that $N(h^n) \ge k$.  However, this bound incurs a considerable penalty in the analytic number theory for small integers $k$.
To track this penalty, we require a bound on the selection of prime powers in the spirit of \cite[Lemma 5.3]{Chang-BIBD1} or a deeper look at explicit estimates for primes in Dirichlet's theorem such as in the data attached to \cite{Dirichlet-bounds}.

Concerning explicit sets of HMOLS, we showed $N(2^{401}) \ge 9$ in Example~\ref{401}.  A few other sets of 9 HMOLS of type $2^n$ for $n<1000$ were found, along with a set of 10 HMOLS of type $2^{1009}$.  Our search for a pair of vectors with the needed cyclotomic constraints was very na\"ive.  We also restricted our computational efforts to the case $h=2$.
With some improved code to search for vectors $u_1,u_2,\dots,u_h$ producing a set of HMOLS, one could envision an expanded table of lower bounds.  Such an undertaking could offer a better sense of what to expect in practice from the standard construction methods and set a benchmark for future bounds.

As a separate line of investigation, it would be of interest to improve the exponent of Theorem~\ref{imols-bound}, our bound on MOLS with exactly one hole of a fixed size.  A direct use of the Buchstab sieve as in \cite{WilsonMOLS} is likely to do a bit better than our (indirect) method.

\end{document}